\newtheorem{theorem}{Theorem}[section]
\newtheorem{proposition}[theorem]{Proposition}
\theoremstyle{definition}
\newtheorem{definition}[theorem]{Definition}
\newtheorem{example}[theorem]{Example}
\newtheorem{remark}[theorem]{Remark}
\numberwithin{equation}{section}
\begin{document}
\setcounter{page}{1}

\vspace*{2.0cm}
\title[Variation arising from a generalized two-normed space]
{Functions of bounded variation arising from a generalized two-normed space}
\author[F. Osmin, F. Osmin, C. Jaffeth]{ Osmin Ferrer Villar$^{1,*}$, Kandy Ferrer Sotelo$^2$, Jaffeth Cure Arenas$^3$}
\maketitle
\vspace*{-0.6cm}

\begin{center}
{\footnotesize

$^1$Department of Mathematics, University of Sucre.
Red door, Sincelejo, Sucre, Colombia.\\
$^2$Center for Basic Sciences, School of Engineering and Architecture, Pontifical Bolivarian University, Monteria, Colombia.\\
$^3$Department of Applied Mathematics and Physics, Catholic University of the Most Holy Conception, Concepción, Chile.
}\end{center}

\vskip 4mm {\footnotesize \noindent {\bf Abstract.}
The first formal development of functions with bounded variation in normed spaces is attributed to Chistyakov \cite{Chistyakov Metric}, and was later extended to the context of 2-normed spaces by Cure, Ferrer S., and Ferrer V. \cite{CFF}. In this paper, we elaborate on this extension (Definition \ref{vaacotada}), exploring its fundamental properties. The foundational characteristics of functions of bounded 2-variation within the context of generalized two-normed spaces are explored in detail (see Theorems \ref{ass}, \ref{sum prod}, \ref{distributive}).  Additionally, it is shown that a function of bounded variation defined on a semi-normed space can generate a function of bounded 2-variation in the context of generalized two-normed spaces (Proposition \ref{varimplica2kacotada}).

 \noindent {\bf Keywords.}
Generalized two-normed spaces; Functions of bounded 2-variation; Bounded variation; Seminormed structures.

 \noindent {\bf MSC.}
46C50, 46C05, 26A45.}

\renewcommand{\thefootnote}{}
\footnotetext{ $^*$Corresponding author.
\par
E-mail addresses: osmin.ferrer@unisucre.edu.co (F. Osmin), kandy.ferrer@upb.edu.co (F. Kandy),\\ jcure@magister.ucsc.cl (C. Jaffeth).
\par
Received January 23, 2015; Accepted February 16, 2016. }

\section{Introduction}
\label{Sec:1}
Chistyakov, in \cite{Chistyakov mappings, cmultivalued, Chistyakov Metric}, conducted a detailed study of the concept of variation for functions defined on a subset $E \subset \mathbb{R}$, whose values belong to a normed or metric space, using the notion of $p$-variation in the sense of Wiener, as well as the classical variation introduced by Jordan \cite{Jordan}, among other approaches. From the need to extend the scope of bounded variation functions, extensions towards more general structures \cite{FNG, CFF} have emerged. In this paper, we propose a new perspective on these ideas that generalizes these notions within the setting of generalized two-formed spaces.

The notion of bounded variation has proven essential in various practical applications, such as image processing, image restoration, and noise removal. Examples include the high-order total variation model proposed by Banothu \cite{Banothu}, which enhances gamma noise removal while preserving details without staircase effects; the curvature-dependent model for image restoration by Nan, Qiao, and Zhang \cite{Nan}, which maintains edge sharpness through elastic energy and total variation; and the approach by Bi, Li, Cai, and Zhang \cite{Bi}, which introduces a non-convex variation for image reconstruction, optimizing both visual and quantitative results in complex images. These applications highlight the importance of studying functions of bounded variation within more general frameworks.

We guarantee that the concept of bounded variation function is well defined in the structure of generalized two-normed spaces which constitutes a significant advance in the discovery of new applications of this theory in more general contexts. 

\section{Research gap}
Although Chistyakov \cite{Chistyakov Metric} and Cure, Ferrer S., and Ferrer V. \cite{CFF} have made significant advancements in normed and two-normed spaces, respectively, there has been no comprehensive exploration of how these definitions can be generalized or adapted to generalized two-normed spaces, which provide a broader and more flexible framework for functional analysis. Despite their wide application in areas such as the theory of operators \cite{lewandowska1}, the study of functions of bounded variation in the context of generalized two-normed spaces remains largely unexplored, representing a significant lacune in the field.

The classical theory of functions of bounded variation has proven crucial in practical applications such as image restoration and noise removal, where variation controls the smoothness of solutions. However, understanding how these functions behave in generalized two-normed spaces could unlock new possibilities for modeling and analyzing these problems. Despite the well-established applications of functions of bounded variation, the absence of a rigorous framework for their study in these spaces limits the potential to extend classical results to more general and complex contexts.

We attempt to fill this gap by investigating two fundamental questions:

(i) How can one adequately define the notion of functions of bounded variation in generalized two-normed spaces, while respecting their fundamental characteristics?

(ii) What conditions must be fulfilled for functions of bounded variation in generalized two-formed spaces to retain the essential properties of classical functions of bounded variation?

\section{Preliminaries}
\begin{definition}\cite{lewandowska1, lewandowska}
Let $A, B$ be vector spaces over $\mathbb{C}$.\\
$\mathbb{W}\subset A\times B$ will represent a non-empty set such that for each $a \in A$, $b \in B$ the following characteristics are satisfied.
\begin{itemize}
    \item $\mathbb{W}^{b} = \{a \in A : (a, b) \in \mathbb{W}\}$ is a vector subspace of $A$.
    \item $\mathbb{W}_{a} = \{b \in B : (a, b) \in \mathbb{W}\}$ is a vector subspace of $B$.
\end{itemize}
The mapping $\|\cdot, \cdot\| \colon \mathbb{W} \longrightarrow [0, \infty)$ which satisfies:
\begin{enumerate}
\item[(G1)] $\|a, \alpha b\| = |\alpha|\|a, b\| = \|\alpha a, b\|$ for each $\alpha \in \mathbb{C}$ and each $(a, b) \in \mathbb{W}$;

\item[(G2)] $\|a, b + c\| \leq \|a, b\| + \|a, c\|$ for $a \in A$, $b, c \in B$ such that $(a, b), (a, c) \in \mathbb{W}$;

\item[(G3)] $\|a + b, c\| \leq \|a, c\| + \|b, c\|$ for $a, b \in A$, $c \in B$ such that $(a, c), (b, c) \in \mathbb{W}$.
\end{enumerate}
is known as a \emph{generalized two-norm on $\mathbb{W}$}.

When $\mathbb{W}$ coincides with $A\times B$, then $(A\times B, \|\cdot, \cdot\|)$ is said to be a generalized two-normed space. If, in addition, the two vector spaces $A,B$ are equal, we simply write $(A, \|\cdot, \cdot\|)$.

\end{definition}

\begin{example}\label{generaltwo-norma}
Consider $A = \mathbb{C}^2$ and $B = \mathbb{C}$. Let $\mathbb{W} = \mathbb{C}^2 \times \mathbb{C}$ be a set such that for every $k \in \mathbb{C}$ and $w \in \mathbb{C}^2$, the sets $\mathbb{W}^k = \{w \in \mathbb{C}^2 : (w, k) \in \mathbb{W}\}$ and $\mathbb{W}_w = \{k \in \mathbb{C} : (w, k) \in \mathbb{W}\}$ are vector subspaces of $\mathbb{C}^2$ and $\mathbb{C}$, respectively.

The application $\|\cdot, \cdot\| \colon \mathbb{W} \longrightarrow [0, \infty)$, defined by $\|w, k\| = \|(w_1, w_2), k\| = \sqrt{|w_1|^2 + |w_2|^2} |k|$ is a generalized two-norm on $\mathbb{W}$.

In fact,

(G1) Let $w \in \mathbb{C}^2$, $k \in \mathbb{C}$, and $\beta \in \mathbb{C}$ such that $(w, k) \in \mathbb{W}$.
\begin{align*}
\|w, \beta k\| &= \sqrt{|w_{1}|^2 + |w_{2}|^2} |\beta k| = |\beta| \sqrt{|w_{1}|^2 + |w_{2}|^2} |k| = |\beta| \|w, k\| \\
&= \sqrt{|\beta|^2 (|w_{1}|^2 + |w_{2}|^2)} |k| = \sqrt{|\beta w_{1}|^2 + |\beta w_{2}|^2} |k| = |\beta w, k\|.
\end{align*}

(G2) Let $w \in \mathbb{C}^2$ and $k, l \in \mathbb{C}$ such that $(w, k), (w, l) \in \mathbb{W}$.
\begin{align*}
\|w, k + l\| &= \sqrt{|w_{1}|^2 + |w_{2}|^2} |k + l| \le \sqrt{|w_{1}|^2 + |w_{2}|^2} |k| + \sqrt{|w_{1}|^2 + |w_{2}|^2} |l| \\
&= \|w, k\| + \|w, l\|.
\end{align*}

(G3) Let $w, l \in \mathbb{C}^2$ and $k \in \mathbb{C}$ such that $(w, k), (w, k) \in \mathbb{W}$.
\begin{align*}
\|w + l, k\| &= \sqrt{|w_{1} + l_{1}|^2 + |w_{2} + l_{2}|^2} |k| \\
&\le \sqrt{|w_{1}|^2 + |w_{2}|^2} |k| + \sqrt{|l_{1}|^2 + |l_{2}|^2} |k| = \|w, k\| + \|l, k\|.
\end{align*}

\end{example}

\begin{definition}\cite{CFF}
Given $(\mathbb{W}\subset A\times B,\|\cdot,\cdot\|)$ a generalized two-normed space and $k\in B$. A mapping $g\colon [a,b]\longrightarrow A$ is defined \emph{$(2,k)$-bounded} if the set $\big\{\|g(t),k\|\colon t\in [a,b]\big\}$ is upper bounded.

\end{definition}

\begin{remark}\cite{lewandowska1}
Let $A \neq \emptyset$ be a set and $\mathrm{E} \subset A\times A$. If
$$
(a, b) \in \mathrm{E} \Longrightarrow (b, a) \in \mathrm{E}, \quad \text{for each } (a, b) \in \mathrm{E},
$$
then it is written as $\mathrm{E} = \mathrm{E}^{-1}$.
\end{remark}

\begin{definition}\cite{lewandowska1, lewandowska}
Let $A$ be a vector space over $\mathbb{C}$ and $\mathrm{E} \subset A\times A$ a set with the property $\mathrm{E} = \mathrm{E}^{-1}$. A aplication $\|\cdot,\cdot\|\colon \mathrm{E} \longrightarrow [0, \infty)$ that satisfies:
\begin{enumerate}
\item[(S1)] $\|a,b\| = \|b,a\|$ for each $(a,b) \in \mathrm{E}$;

\item[(S2)] $\|x,\alpha y\| = |\alpha|\|x,y\|$ for each $\alpha \in \mathbb{C}$ and $(a,b) \in \mathrm{E}$;

\item[(S3)] $\|x,y+z\| \leq \|x,y\| + \|x,z\|$ for $a,b,c \in X$ such that $(a,b), (a,c) \in \mathrm{E}$;
\end{enumerate}
is known as a \emph{generalized symmetric two-norm on $\mathrm{E}$}.\\
In the case where $\mathrm{E}$ coincides with $ A\times A$, we will refer to the mapping $\|\cdot,\cdot\|$ simply as a generalized symmetric two-norm on $A$ and to the pair $(A,|\cdot , \cdot\|)$ a generalized symmetric two-normed space.
\end{definition}

\begin{proposition}\label{simgen}\cite{lewandowska1}
Any two-normed space in the Gähler sense can be viewed as a generalized symmetric two-normed space.
\end{proposition}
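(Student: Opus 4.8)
The plan is to realize the given Gähler two-normed space as a generalized symmetric two-normed space by taking the domain $\mathrm{E}$ to be the entire product $A\times A$, and then to check that the three axioms (S1)--(S3) are immediate consequences of the Gähler axioms. In other words, the proof should consist of a direct verification that the Gähler structure already satisfies everything demanded of a generalized symmetric two-norm, together with the observation that the Gähler axioms are strictly stronger.

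First I would recall the defining properties of a two-norm $\|\cdot,\cdot\|$ on a vector space $A$ in the Gähler sense: symmetry $\|a,b\|=\|b,a\|$, absolute homogeneity in the first argument $\|\alpha a,b\|=|\alpha|\,\|a,b\|$, the triangle inequality $\|a,b+c\|\leq\|a,b\|+\|a,c\|$, and the nondegeneracy condition that $\|a,b\|=0$ precisely when $a,b$ are linearly dependent. I then set $\mathrm{E}:=A\times A$. Since the product $A\times A$ is stable under swapping coordinates, every pair $(a,b)\in\mathrm{E}$ yields $(b,a)\in\mathrm{E}$, so the relation $\mathrm{E}=\mathrm{E}^{-1}$ required by the definition holds automatically, and the Gähler norm is already a map $\mathrm{E}\longrightarrow[0,\infty)$, so no extension or restriction of its domain is needed.

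Next I would verify the three axioms one at a time. Axiom (S1) is nothing but the symmetry axiom of the Gähler norm, and axiom (S3) is nothing but its triangle inequality; both transfer verbatim. The only step that is not a literal copy is (S2), homogeneity in the second argument, which I would obtain by combining symmetry with first-argument homogeneity, writing $\|x,\alpha y\|=\|\alpha y,x\|=|\alpha|\,\|y,x\|=|\alpha|\,\|x,y\|$. Thus (S2) follows from the Gähler axioms as well.

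I do not expect any genuine obstacle here: the Gähler structure is strictly richer than the generalized symmetric two-norm structure, since it additionally carries the nondegeneracy requirement, which plays no role in (S1)--(S3) and is therefore simply discarded. The one point worth stating explicitly is that the definition of a generalized symmetric two-norm imposes no subspace conditions on the slices of $\mathrm{E}$ beyond $\mathrm{E}=\mathrm{E}^{-1}$, so the choice $\mathrm{E}=A\times A$ is admissible; the substance of the proposition is then merely that forgetting the nondegeneracy axiom of Gähler places one inside the broader class of generalized symmetric two-normed spaces.
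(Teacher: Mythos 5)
Your proposal is correct; note that the paper itself supplies no proof of this proposition, simply citing Lewandowska \cite{lewandowska1}, so there is nothing to diverge from. Your verification --- taking $\mathrm{E}=A\times A$ (which trivially satisfies $\mathrm{E}=\mathrm{E}^{-1}$), reading (S1) and (S3) directly from G\"ahler's symmetry and triangle inequality, obtaining (S2) as $\|x,\alpha y\|=\|\alpha y,x\|=|\alpha|\,\|y,x\|=|\alpha|\,\|x,y\|$, and discarding the nondegeneracy axiom, which the generalized structure never requires --- is precisely the standard argument this citation points to.
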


Next, we will show a direct process by which a generalized two-normed space is guaranteed from two semi-normed spaces.
\begin{proposition}\cite{lewandowska}\label{seminormados}
Let $(A, \|\cdot\|_{A})$, $(B, \|\cdot\|_{B})$ be complex semi-normed spaces. Then, the function $\|\cdot,\cdot\|\colon A\times B\longrightarrow [0,\infty )$ defines a generalized two-norm through the expression
\begin{align*}
\|a,b\|=\|a\|_{A}\|b\|_{B}.
\end{align*}
\end{proposition}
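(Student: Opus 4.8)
The plan is to verify directly that the product map $\|a,b\| = \|a\|_{A}\|b\|_{B}$ satisfies the three defining axioms (G1)--(G3) of a generalized two-norm, taking the domain to be the full product $\mathbb{W} = A\times B$. Since $\mathbb{W}$ coincides with $A\times B$, each slice $\mathbb{W}^{b} = A$ and $\mathbb{W}_{a} = B$ is automatically a vector subspace, so the set-theoretic hypotheses on $\mathbb{W}$ are satisfied trivially and only the three identities and inequalities remain to be checked. I would first record that the map is well-defined as a function into $[0,\infty)$: both seminorms take values in $[0,\infty)$, hence so does their product.

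For (G1), I would fix $\alpha \in \mathbb{C}$ and $(a,b) \in A\times B$ and invoke the absolute homogeneity of each seminorm separately in each slot. Homogeneity of $\|\cdot\|_{B}$ gives $\|a,\alpha b\| = \|a\|_{A}\|\alpha b\|_{B} = |\alpha|\,\|a\|_{A}\|b\|_{B} = |\alpha|\,\|a,b\|$, while homogeneity of $\|\cdot\|_{A}$ gives $\|\alpha a,b\| = \|\alpha a\|_{A}\|b\|_{B} = |\alpha|\,\|a\|_{A}\|b\|_{B} = |\alpha|\,\|a,b\|$; chaining these two computations yields the required double equality.

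For (G2) and (G3), the essential point is that multiplication by a fixed nonnegative scalar preserves the direction of an inequality. For (G2), fixing $a\in A$ and $b,c\in B$, I would apply the triangle inequality of $\|\cdot\|_{B}$, namely $\|b+c\|_{B} \le \|b\|_{B} + \|c\|_{B}$, and then multiply through by $\|a\|_{A} \ge 0$ and distribute to obtain $\|a,b+c\| = \|a\|_{A}\|b+c\|_{B} \le \|a\|_{A}\|b\|_{B} + \|a\|_{A}\|c\|_{B} = \|a,b\| + \|a,c\|$. Property (G3) is entirely symmetric, applying the triangle inequality of $\|\cdot\|_{A}$ and multiplying by the nonnegative factor $\|c\|_{B}$. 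No step constitutes a genuine obstacle; the only subtlety worth an explicit word is that each triangle inequality is scaled by the seminorm of the held-fixed argument, and it is precisely the nonnegativity of seminorms that licenses preserving the inequality, so that no sign reversal can occur.
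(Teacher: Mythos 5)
Your proof is correct. Note that the paper itself offers no proof of this proposition---it is quoted as a known result from \cite{lewandowska}---so there is no in-paper argument to compare against; your direct verification of (G1)--(G3), with the slice conditions trivially satisfied because $\mathbb{W}=A\times B$, is the standard and expected argument, and it is precisely the general form of the computation the paper carries out for the special case in Example \ref{generaltwo-norma} (where $\|\cdot\|_A$ is the Euclidean norm on $\mathbb{C}^2$ and $\|\cdot\|_B$ is the modulus on $\mathbb{C}$).
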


\begin{proposition}\label{consecuencia triangular}
\cite{white} If $(a, c), (b, c)$ are elements of the two-normed space $(\mathbb{W} \subset A\times B, \|\cdot, \cdot\|)$, then
$$\big| \|a,c\|-\|b,c\| \big|\le  \|a-b,c\|$$
\end{proposition}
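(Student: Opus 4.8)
The plan is to establish the two inequalities $\|a,c\|-\|b,c\|\le\|a-b,c\|$ and $\|b,c\|-\|a,c\|\le\|a-b,c\|$ separately, and then combine them to bound the absolute value. This is the familiar ``reverse triangle inequality'' argument, transplanted to the generalized two-norm setting, where the only genuine wrinkle is making sure every expression that appears actually lies in the domain $\mathbb{W}$ on which the norm is defined.

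First I would verify that the right-hand side $\|a-b,c\|$ even makes sense. By hypothesis $(a,c),(b,c)\in\mathbb{W}$, so $a,b\in\mathbb{W}^{c}=\{x\in A:(x,c)\in\mathbb{W}\}$. Since $\mathbb{W}^{c}$ is by definition a vector subspace of $A$, it is closed under subtraction, hence $a-b\in\mathbb{W}^{c}$, i.e. $(a-b,c)\in\mathbb{W}$. This is the step I expect to be the main (and essentially only) obstacle: it is the point where the weaker ``generalized'' structure must be invoked, since unlike an ordinary norm the map $\|\cdot,\cdot\|$ need not be defined on all of $A\times B$.

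Next I would write $a=(a-b)+b$ and apply the subadditivity axiom (G3) in the first slot, which is legitimate because $(a-b,c),(b,c)\in\mathbb{W}$:
\begin{equation*}
\|a,c\|=\|(a-b)+b,c\|\le\|a-b,c\|+\|b,c\|,
\end{equation*}
giving $\|a,c\|-\|b,c\|\le\|a-b,c\|$. Symmetrically, writing $b=(b-a)+a$ and applying (G3) yields $\|b,c\|\le\|b-a,c\|+\|a,c\|$; then the homogeneity axiom (G1) with the scalar $-1$ gives $\|b-a,c\|=\|-(a-b),c\|=|-1|\,\|a-b,c\|=\|a-b,c\|$, so that $\|b,c\|-\|a,c\|\le\|a-b,c\|$.

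Finally, since the real number $\|a,c\|-\|b,c\|$ and its negative $\|b,c\|-\|a,c\|$ are both bounded above by $\|a-b,c\|$, their maximum — which is exactly $\big|\|a,c\|-\|b,c\|\big|$ — satisfies the same bound, establishing the claim.
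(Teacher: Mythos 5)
Your proof is correct. The paper itself gives no argument for this proposition --- it is simply quoted from White's paper on 2-Banach spaces --- so there is no internal proof to compare against; your argument is the standard reverse-triangle-inequality proof, and you correctly identify and handle the one point genuinely specific to the generalized setting, namely that $(a-b,c)\in\mathbb{W}$ follows from $\mathbb{W}^{c}$ being a vector subspace of $A$, which is what licenses the applications of (G3) and (G1).
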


\begin{definition}
\cite{lewandowska1} A sequence $\{a_n\}_{n \in \mathbb{N}}$ in the generalized two-normed space $(\mathbb{W} \subset A\times B, \|\cdot, \cdot\|)$ converges to an element $a$ in $A$ if for all $\epsilon >0$, there exists $N\in \mathbb{N}$ such that $\|a_n - a, k\|<\epsilon$ whenever $n>N$ is verified.
\end{definition}

\begin{definition}
\cite{lewandowska1} A sequence $\{a_n\}_{n \in \mathbb{N}}$ in the generalized two-normed space\\ $(\mathbb{W} \subset A\times B, \|\cdot, \cdot\|)$ is said \emph{$k$-Cauchy} if for all $\epsilon >0$, there exists a natural number $N$ such that $\|a_m - a_n , k\|<\epsilon$ whenever $m,n>N$ is verified.
\end{definition}

\begin{definition}
A \emph{generalized $(2, k)$-Banach space} is that space in which any $k$-Cauchy sequence converges to an element of the space.
\end{definition}

\section{Main results}
  
\subsection{Bounded generalized 2-variation}

In the present section we construct a type of variation for definite mappings of a closed interval of real numbers in a generalized two-normed space.
\begin{definition}
Let $(\mathbb{W}\subset A\times B,\|\cdot,\cdot\|)$ be a generalized two-normed space and $k\in B$. 
Let us consider a mapping $g\colon [a,b]\subset\mathbb{R}\longrightarrow \mathbb{W}^{k}$ and  $P\in\mathcal{P}([a,b])$ a partition.

We will call the sum $\displaystyle\sum_{i=1}^{n} \| g(t_i)-g(t_{i-1}), k\|$ the 2-variation of $g$ associated with $k$ with respect to the partition $P$ in the generalized two-normed space $(\mathbb{W},\|\cdot,\cdot\|)$. We will write the above by $V_a^b[g,\mathbb{W}^{k};P]$.

The supremum of the set $\left\{ V_a^b[g,\mathbb{W}^{k};P]\colon P\in\mathcal{P}([a,b])\right\}$, assuming that this exists, will be called \emph{2-variation of $g$ associated with $k$ in the generalized two-normed space $(\mathbb{W},\|\cdot,\cdot\|)$}, 
and we will write it by $V_{a}^{b}(g,\mathbb{W}^{k})$.
\end{definition}
It is clear that the non-negativity of $V_a^b(g,\mathbb{W}^{k})$ is guaranteed.

\begin{remark}
Note that since $\mathbb{W}^{k}$ is a vector subspace of $X$, $(g(s)-g(t)) \in \mathbb{W}^{k}$ for all $s,t \in [a,b]$.
\end{remark}

\begin{example}\label{ejemplo2ka}
Consider $(\mathbb{W} \subset \mathbb{C}^2 \times \mathbb{C}, \|\cdot, \cdot\|)$ with the generalized two-norm given in \ref{generaltwo-norma}. Let $\sqrt{2} \in \mathbb{C}$ be fixed and $g \colon [a,b] \longrightarrow \mathbb{W}^{\sqrt{2}}$ be a function defined by $g(x) = (xi, xi)$. We will now demonstrate that 
$g$ possesses bounded generalized 2-variation.

Indeed, let $P = \left\{x_0, \cdots, x_n\right\} \in \mathcal{P}([a,b])$, then
\begin{align*}
\sum_{j=1}^{n}\|g(x_{j})-g(x_{j-1}),\sqrt{2}\|&=\sum_{j=1}^{n}\|(ix_{j},ix_{j})-(ix_{j-1},ix_{j-1}),\sqrt{2}\|\\
&=\sum_{j=1}^{n}\|(ix_{j}-ix_{j-1},ix_{j}-ix_{j-1}),\sqrt{2}\|\\
&=\sqrt{2}\sum_{j=1}^{n}\sqrt{|ix_{j}-ix_{j-1}|^{2}+|ix_{j}-ix_{j-1}|^{2}}\\
&=\sqrt{2}\sum_{j=1}^{n}\sqrt{|x_{j}-x_{j-1}|^{2}+|x_{j}-x_{j-1}|^{2}}\\
&=2\sum_{j=1}^{n}\sqrt{|x_{j}-x_{j-1}|^{2}}=2\sum_{j=1}^{n}(x_{j}-x_{j-1})\\
&=2(b-a) .
\end{align*}
It is concluded that the set $\{ V^{a}_{b}[g,\mathbb{W}^{\sqrt{2}};P] \colon P \in \mathcal{P}([a,b]) \}$ is upper bounded by $2(b-a)$. Thus verifying that $g$ is a bounded 2-variance function in $(\mathbb{W} \subset \mathbb{C}^2 \times \mathbb{C}, \|\cdot, \cdot\|)$.
\end{example}

\begin{theorem}\label{ass}
Given a {two-normed space} $(\mathbb{W}\subset A\times B,\|\cdot,\cdot\|)$, $k,l\in B$ fixed, $\alpha \in \mathbb{C}$ and two applications $g,h\colon [a,b]\to \mathbb{W}^{k}$, are verified:
\begin{enumerate}
\item[$(i)$] $V_a^b(\alpha g,\mathbb{W}^{k})=| \alpha | V_a^b(g,\mathbb{W}^k)$.
\item[$(ii)$] $V_a^b(g+h,\mathbb{W}^k)\leq V_a^b(g,\mathbb{W}^{k})+V_a^b(h,\mathbb{W}^{k})$.
\item[$(iii)$] $V_a^b(g,\mathbb{W}^{k}\cap \mathbb{W}^{l})\leq V_a^b(g,\mathbb{W}^{k})+V_a^b(g,\mathbb{W}^{l})$.
\end{enumerate}
\end{theorem}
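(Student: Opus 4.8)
The plan is to establish each of the three relations first at the level of an arbitrary fixed partition $P=\{t_0,\dots,t_n\}\in\mathcal{P}([a,b])$, and then to pass to the supremum over all partitions. All three parts reduce to a termwise application of one of the defining axioms (G1)--(G3) of the generalized two-norm, followed by a routine supremum argument, so the overall architecture is uniform.

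For $(i)$, I would begin from the partition-level sum $V_a^b[\alpha g,\mathbb{W}^{k};P]=\sum_{i=1}^{n}\|\alpha g(t_i)-\alpha g(t_{i-1}),k\|$, factor the scalar out of each difference, and apply (G1) to get $\|\alpha(g(t_i)-g(t_{i-1})),k\|=|\alpha|\,\|g(t_i)-g(t_{i-1}),k\|$. Summing yields $V_a^b[\alpha g,\mathbb{W}^{k};P]=|\alpha|\,V_a^b[g,\mathbb{W}^{k};P]$ for every $P$. Taking the supremum over $P$ and using that $|\alpha|\ge 0$ is a constant factor, which commutes with the supremum (the case $\alpha=0$ being trivial, since both sides vanish), gives the claimed equality.

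For $(ii)$, the key tool is (G3), subadditivity in the first slot. For a fixed $P$, I would write each increment of $g+h$ as $(g(t_i)-g(t_{i-1}))+(h(t_i)-h(t_{i-1}))$, both summands lying in the subspace $\mathbb{W}^{k}$ so that (G3) applies, and bound the resulting norm by the sum of the two norms. Summing over $i$ gives $V_a^b[g+h,\mathbb{W}^{k};P]\le V_a^b[g,\mathbb{W}^{k};P]+V_a^b[h,\mathbb{W}^{k};P]$, and each right-hand term is dominated by its own supremum. Since this bound is independent of $P$, taking the supremum of the left side yields $(ii)$. For $(iii)$, I would first note that $\mathbb{W}^{k}\cap\mathbb{W}^{l}\subseteq\mathbb{W}^{k+l}$: if $(a,k),(a,l)\in\mathbb{W}$ then $k,l\in\mathbb{W}_a$, and since $\mathbb{W}_a$ is a vector subspace of $B$ we get $k+l\in\mathbb{W}_a$, i.e. $(a,k+l)\in\mathbb{W}$. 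This makes the variation $V_a^b(g,\mathbb{W}^{k}\cap\mathbb{W}^{l})$, measured through $\|\,\cdot\,,k+l\|$, well defined on the increments of $g$. The relevant tool is then (G2), subadditivity in the second slot: for each increment $u_i=g(t_i)-g(t_{i-1})$ one has $\|u_i,k+l\|\le\|u_i,k\|+\|u_i,l\|$. Summing and passing to suprema exactly as in $(ii)$ produces $(iii)$.

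The computations are routine; the only points requiring care are the supremum manipulations, specifically the elementary inequality $\sup_P\big(F(P)+G(P)\big)\le\sup_P F(P)+\sup_P G(P)$ used in $(ii)$ and $(iii)$, and the commutation of the nonnegative scalar with the supremum in $(i)$, together with the observation that $\mathbb{W}^{k}\cap\mathbb{W}^{l}\subseteq\mathbb{W}^{k+l}$, which legitimizes the norm $\|\,\cdot\,,k+l\|$ appearing implicitly in $(iii)$. I do not anticipate a genuine obstacle; the main discipline is to make explicit the interpretation of $V_a^b(g,\mathbb{W}^{k}\cap\mathbb{W}^{l})$ and to keep the partition-level (in)equalities cleanly separated from the passage to the supremum.
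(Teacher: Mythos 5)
Your proof is correct, and for parts (i) and (ii) it is essentially the paper's own argument made explicit: the paper disposes of both in one line (``a consequence of $\mathbb{W}^{k}$ being a vector subspace of $A$''), leaving the termwise use of (G1), respectively (G3), and the passage to the supremum implicit; you supply exactly those steps, including the inequality $\sup_P\big(F(P)+G(P)\big)\le\sup_P F(P)+\sup_P G(P)$, which is the only reasonable reading of the paper's remark.

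Where you genuinely diverge is part (iii), and the divergence is to your credit. The quantity $V_a^b(g,\mathbb{W}^{k}\cap\mathbb{W}^{l})$ is never defined in the paper, so any proof must first fix its meaning. The paper reads it as a supremum of sums $\sum_{i}\|g(t_i)-g(t_{i-1}),p\|$ with $p$ ranging over $\mathbb{W}^{k}\cap\mathbb{W}^{l}$; but $\mathbb{W}^{k}\cap\mathbb{W}^{l}$ is a subspace of $A$, while the second slot of the generalized two-norm must hold elements of $B$, so this reading has a type mismatch (it only makes sense if $A=B$), and even granting it, the paper's first displayed inequality is merely the trivial fact that adding a nonnegative term preserves $\le$, while its final step --- bounding the supremum over $p$ in the intersection by $V_a^b(g,\mathbb{W}^{k})+V_a^b(g,\mathbb{W}^{l})$, which are computed with the \emph{fixed} elements $k$ and $l$ --- is asserted without justification. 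You instead prove $\mathbb{W}^{k}\cap\mathbb{W}^{l}\subseteq\mathbb{W}^{k+l}$ via the subspace property of $\mathbb{W}_a$, interpret the left-hand variation through $\|\cdot\,,k+l\|$, and apply (G2) termwise to get $\|u_i,k+l\|\le\|u_i,k\|+\|u_i,l\|$ before taking suprema. This reading type-checks, invokes the axiom (G2) that the paper's argument never actually uses, and genuinely delivers the inequality; it is the more defensible proof. Two caveats: your argument rests on a chosen interpretation of undefined notation (you rightly flag this), and for (iii) the hypothesis must be that $g$ takes values in $\mathbb{W}^{k}\cap\mathbb{W}^{l}$ --- as the paper's own proof silently assumes --- rather than merely in $\mathbb{W}^{k}$ as the theorem's preamble states.
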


\begin{proof}
(i) and (ii) are a consequence of $\mathbb{W}^{k}$ being a vector subspace of $A$.

(iii) Let $g \colon [a,b] \longrightarrow \mathbb{W}^{k} \cap \mathbb{W}^{l}$. Thus,
\begin{align*}
V_a^b(g,\mathbb{W}^{k}\cap \mathbb{W}^{l})&=\sup \left\{ \sum_{i=1}^{n} \| g(t_i)- g(t_{i-1}), p\|\colon p\in \mathbb{W}^{k}\cap \mathbb{W}^{l}\right\}\\
&\le \sup \left\{ \sum_{i=1}^{n} \| g(t_i)- g(t_{i-1}), p\|\colon p\in \mathbb{W}^{k}\cap \mathbb{W}^{l}\right\}\\
&\ \ \ +\sup \left\{ \sum_{i=1}^{n} \| g(t_i)- g(t_{i-1}), p\|\colon p\in \mathbb{W}^{k}\cap \mathbb{W}^{l}\right\}\\
&\le V_a^b(g,\mathbb{W}^{k})+V_a^b(g,\mathbb{W}^{l})
\end{align*}
\end{proof}

\subsection{Applications with bounded generalized 2-variation}

From these definitions, we are now able to define what an application with bounded generalized 2-variation is.
\begin{definition}\label{vaacotada}
Consider a generalized two-normed space $(\mathbb{W}\subset A\times B,\|\cdot,\cdot\|)$ and $k\in B$ fixed. We refer to an application $g\colon [a,b]\to \mathbb{W}^{k}$ \emph{of bounded generalized 2-variation} if the set $$\left\{ V_a^b[g,\mathbb{W}^{k};P]\colon P\in\mathcal{P}([a,b])\right\}$$ is upper bounded. 
\end{definition}

\begin{remark}
The notation $BV([a, b], \mathbb{W}^k)$ will be used to represent the family of functions defined on $[a,b]$ that possesing bounded generalized 2-variation, specifically,
$$BV ([a, b],\mathbb{W}^{k})=\left\{g\colon [a,b]\to \mathbb{W}^{k}\colon V_a^b(g,\mathbb{W}^{k})<\infty\right\}.$$
\end{remark}

\begin{remark}\label{2kvar2var}
Let $(X, \|\cdot,\cdot\|)$ be a two-normed space in the Gähler sense. If $g$ is a function with bounded $(2,k)$-variation over $[a,b]$ in $X$, as defined in \cite{CFF}, and $\|\cdot,\cdot\|$ is interpreted as a symmetric generalized two-norm with $\mathbb{W}=X\times X$, then it follows that $V_a^b(g, \mathbb{W}^k) = V_a^b(g, X, k)$. Consequently, $g$ can be classified as a function of bounded generalized 2-variation in $X$.
\end{remark}

The next proposition ensures that, in a semi-normed space, every function of bounded variation belongs to the set of functions with bounded generalized 2-variation, considering the generalized two-norm defined in Proposition \ref{seminormados}.
\begin{proposition}\label{varseminorm}
Let $(X, \|\cdot\|)$ be a semi-normed space and $k \in X$. Consider the symmetric generalized two-norm defined by $\|x,y\| = \|x\|\|y\|$. If $g: [a, b] \longrightarrow (X, \|\cdot\|)$ is a function of bounded variation on $[a, b]$, then $g$ is also of bounded generalized 2-variation in $(X, \|\cdot, \cdot\|)$.
\end{proposition}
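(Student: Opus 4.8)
The plan is to reduce the generalized 2-variation of $g$ directly to its ordinary variation, exploiting the multiplicative structure of the two-norm $\|x,y\| = \|x\|\,\|y\|$ supplied by Proposition \ref{seminormados}.

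First I would fix an arbitrary partition $P = \{t_0, \dots, t_n\} \in \mathcal{P}([a,b])$ and write out the associated 2-variation sum $V_a^b[g,\mathbb{W}^k;P] = \sum_{i=1}^{n} \|g(t_i)-g(t_{i-1}),k\|$. Applying the definition of the generalized two-norm, each summand equals $\|g(t_i)-g(t_{i-1})\|\cdot\|k\|$, so that
$$V_a^b[g,\mathbb{W}^k;P] = \|k\|\sum_{i=1}^{n}\|g(t_i)-g(t_{i-1})\|.$$
The sum on the right is exactly the ordinary variation sum of $g$ relative to $P$ in the seminorm $\|\cdot\|$.

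Next I would take the supremum over all $P \in \mathcal{P}([a,b])$. Since $\|k\|$ is a fixed nonnegative constant independent of the partition, it factors out of the supremum, yielding $V_a^b(g,\mathbb{W}^k) = \|k\|\cdot V_a^b(g)$, where $V_a^b(g)$ denotes the ordinary variation of $g$. By hypothesis $g$ is of bounded variation, so $V_a^b(g) < \infty$; hence the product $\|k\|\cdot V_a^b(g)$ is finite and $g \in BV([a,b],\mathbb{W}^k)$.

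I would also record that, in this setting, $\mathbb{W} = X\times X$ forces $\mathbb{W}^k = X$, so the codomain of $g$ is automatically compatible with the definition of bounded generalized 2-variation and needs no separate check. There is essentially no obstacle: the statement follows from the factorization of the two-norm into a product of seminorms, and the only point requiring any care is pulling the constant $\|k\|$ outside the supremum, which is legitimate precisely because it does not depend on $P$.
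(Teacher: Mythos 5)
Your proposal is correct and follows essentially the same route as the paper: both factor each summand as $\|g(t_i)-g(t_{i-1})\|\,\|k\|$ using the multiplicative two-norm and then pull the constant $\|k\|$ outside the supremum to bound the generalized 2-variation by a multiple of the ordinary variation. Your version is marginally cleaner in recording the exact identity $V_a^b(g,\mathbb{W}^k)=\|k\|\,V_a^b(g)$ and in noting $\mathbb{W}^k=X$, but the argument is the same.
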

\begin{proof}
Given that $g$ has bounded variation in $(X, \|\cdot\|)$, there exists a positive constant $p \in \mathbb{R}^{+}$ such that
\begin{equation}
V_{a}^{b}(g,X)=\sup\left\{\sum_{i=1}^{n}\|g(t_{i})-g(x_{i-1})\| \right\}<p.
\end{equation}
Then,
\begin{align*}
V_{a}^{b}(g,\mathbb{W}^{k}) &=\sup \left\{ \sum_{i=1}^{n} \| g(t_i)-g(t_{i-1}), k\|\right\}=\sup \left\{ \sum_{i=1}^{n} \left\| g(t_i)-g(t_{i-1})\right\|\left\| k\right\|\right\}\\
&=\left\| k\right\|\sup \left\{ \sum_{i=1}^{n} \left\| g(t_i)-g(t_{i-1})\right\|\right\}<\left\| k\right\|p.
\end{align*}
As a result, the boundedness of the generalized 2-variation of the function $g$ in the space $(X, \|\cdot, \cdot\|)$ is guaranteed.
\end{proof}

\begin{proposition}\label{st}
Let $g \colon [a,b] \longrightarrow \mathbb{W}^k$ be a function with bounded generalized 2-variation, where $(\mathbb{W} \subset A \times B, \|\cdot,\cdot\|)$ is a generalized two-normed space. For $s, m \in [a, b]$ with \\$a \leq s < m \leq b$, we have
$$
\|g(s) - g(m), k\| \leq V_a^b(g, \mathbb{W}^k).
$$
\begin{proof}
Let $P_{0}=\{a, s, m, b\}$ be a partition of $[a, b]$.\\
Then, since $V_{a}^{b}[g,\mathbb{W}^{k};P_0] \in \left\{ V_{a}^{b}[g,\mathbb{W}^{k};P]\colon P\in\mathcal{P}([a,b])\right\}$, we have
\begin{align*}
V_{a}^{b}(g,\mathbb{W}^{k})&=\sup \bigg\{ V_{a}^{b}[g,\mathbb{W}^{k};P]\colon P\in\mathcal{P}([a.b])\bigg\}\ge V_{a}^{b}[g,\mathbb{W}^{k};P_0]\\
&=\|g(s)-g(a),k\|+\left\|g(m)-g(s),k\right\|+\|g(b)-g(m),k\|\\
&\ge \left\|g(m)-g(s),k\right\|.
\end{align*}
In this way,
$$
\|g(s)-g(m),k\| \leq V_{a}^{b}(g,\mathbb{W}^{k})\text{ for each $a \leq s<m \leq b$.}
$$
\end{proof}
\end{proposition}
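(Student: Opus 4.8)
The plan is to exploit the definition of $V_a^b(g, \mathbb{W}^k)$ as a supremum taken over all partitions of $[a,b]$, and to extract the desired single-term bound by testing that supremum against one cleverly chosen partition. Since the supremum dominates the variation sum associated with any particular partition, it suffices to produce a partition whose variation sum already contains the quantity $\|g(s) - g(m), k\|$ as one of its summands; the remaining summands can then simply be thrown away.

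First I would fix $s, m$ with $a \leq s < m \leq b$ and form the partition $P_0 = \{a, s, m, b\}$ of $[a,b]$. Unwinding the definition of the variation sum gives
$$V_a^b[g, \mathbb{W}^k; P_0] = \|g(s) - g(a), k\| + \|g(m) - g(s), k\| + \|g(b) - g(m), k\|.$$
Next I would invoke the non-negativity of the generalized two-norm (its codomain is $[0,\infty)$) to discard the first and third summands, obtaining $\|g(m) - g(s), k\| \leq V_a^b[g, \mathbb{W}^k; P_0]$. Finally, because $P_0 \in \mathcal{P}([a,b])$ and $V_a^b(g, \mathbb{W}^k)$ is by definition the supremum of the variation sums over all such partitions, one has $V_a^b[g, \mathbb{W}^k; P_0] \leq V_a^b(g, \mathbb{W}^k)$, and chaining the two inequalities yields the claim. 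Note that the hypothesis of bounded generalized $2$-variation is not even needed for the inequality to hold formally (it guarantees only that the right-hand side is finite, hence that the bound is meaningful).

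The only point requiring a moment's care is the possibility that the four selected nodes fail to be distinct, which occurs precisely when $s = a$ or $m = b$ (or both). In those boundary situations $P_0$ collapses to a partition with two or three distinct points, but the argument is unaffected: the corresponding variation sum still contains $\|g(m) - g(s), k\|$ as one of its non-negative terms, so discarding the remaining non-negative terms continues to produce the identical lower bound. I expect this degenerate-endpoint bookkeeping to be the main (and only mild) obstacle; everything else is a direct application of the supremum property together with the non-negativity of $\|\cdot,\cdot\|$.
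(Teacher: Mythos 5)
Your proposal is correct and follows essentially the same route as the paper's own proof: test the supremum against the partition $P_0=\{a,s,m,b\}$, expand the variation sum, and discard the non-negative extra terms. Your extra remark about the degenerate cases $s=a$ or $m=b$ is a small point of care that the paper's proof silently glosses over, but it does not change the argument.
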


The following theorem establishes that any function with bounded generalized 2-variation in a generalized two-normed space is also $(2,k)$-bounded.
\begin{theorem}\label{varimplica2kacotada}
In a generalized two-normed space $(\mathbb{W} \subset A \times B, \|\cdot, \cdot\|)$ with $k \in B$, it holds that $g \colon [a,b] \longrightarrow \mathbb{W}^k$ is $(2,k)$-bounded whenever $g$ has bounded generalized 2-variation.
\end{theorem}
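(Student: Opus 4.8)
The goal is to exhibit a single finite constant that bounds $\|g(t),k\|$ uniformly over $t\in[a,b]$, which is precisely the definition of $(2,k)$-boundedness. The two ingredients I would lean on are Proposition \ref{st}, which controls each increment $\|g(s)-g(m),k\|$ by the total variation $V_a^b(g,\mathbb{W}^k)$, together with the triangle inequality (G3) and the homogeneity (G1) of the generalized two-norm.

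First I would fix an arbitrary $t\in[a,b]$ and anchor the estimate at the left endpoint $a$. Writing $g(t)=\big(g(t)-g(a)\big)+g(a)$ and applying (G3) in the first slot gives
\[
\|g(t),k\|\le \|g(t)-g(a),k\|+\|g(a),k\|.
\]
For the first term, Proposition \ref{st} (taking $s=a$ and $m=t$) yields $\|g(a)-g(t),k\|\le V_a^b(g,\mathbb{W}^k)$, while by (G1) with scalar $-1$ we have $\|g(t)-g(a),k\|=\|g(a)-g(t),k\|$, so the increment is bounded by $V_a^b(g,\mathbb{W}^k)$. Since $g$ has bounded generalized 2-variation, this quantity is finite, and $\|g(a),k\|$ is a fixed finite number; hence
\[
\|g(t),k\|\le V_a^b(g,\mathbb{W}^k)+\|g(a),k\|.
\]

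The only point requiring care is that Proposition \ref{st} is stated for $a\le s<m\le b$, so the step above is literally valid only when $t>a$. The degenerate case $t=a$ must be treated separately, but there it is trivial, since $\|g(a),k\|$ is already $\le V_a^b(g,\mathbb{W}^k)+\|g(a),k\|$. Consequently the constant $M:=V_a^b(g,\mathbb{W}^k)+\|g(a),k\|$ is a finite upper bound of the set $\{\|g(t),k\|\colon t\in[a,b]\}$ independent of $t$, which establishes that $g$ is $(2,k)$-bounded. I do not anticipate any genuine obstacle here: the argument is a direct transfer of the classical fact that bounded variation forces boundedness, and the only subtleties are the bookkeeping of the strict inequality in Proposition \ref{st} and the orientation of the increment, the latter handled via (G1).
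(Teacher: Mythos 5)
Your proposal is correct and takes essentially the same route as the paper's proof: both anchor the estimate at the left endpoint $a$, bound the increment $\|g(t)-g(a),k\|$ by the total variation, and pass from the increment to $\|g(t),k\|$ via a triangle inequality. The only cosmetic differences are that the paper builds the partition $\{a,x,b\}$ inline and invokes the reverse triangle inequality (Proposition \ref{consecuencia triangular}), whereas you delegate the increment bound to Proposition \ref{st} and apply axiom (G3) directly to the decomposition $g(t)=\bigl(g(t)-g(a)\bigr)+g(a)$ --- the logical content is identical.
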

\begin{proof}
Given that $g$ has bounded generalized 2-variation, there exists a constant $\sigma \in \mathbb{R}^{+}$ such that
\begin{equation}\label{inf1}
V_{a}^{b}(g,\mathbb{W}^{k})\le \sigma.
\end{equation}
Let $x \in (a, b)$. Consider the partition $P_{0}=\{ a,x,b\}$. Note that
$$
\|g(x)-g(a),k\|\leq \|g(x)-g(a),k\|+\|g(b)-g(x),k\|=V_{a}^{b}[g,\mathbb{W}^{k};P_{0}].
$$
Furthermore, using Proposition \ref{consecuencia triangular}, we have
\begin{equation}\label{inf2}
\left\|g(x),k\right\|-\left\|g(a),k\right\|\le\big| \left\|g(x),k\right\|-\left\|g(a),k\right\| \big|\le  \left\|g(x)-g(a),k\right\|.
\end{equation}
By applying the inequalities \eqref{inf1} and \eqref{inf2}, it follows that
\begin{align*}
\sigma&\ge V_{a}^{b}(g,\mathbb{W}^{k})\ge V_{a}^{b}[g,\mathbb{W}^{k};P_{0}]= \|g(x)-g(a),k\|+\|g(b)-g(x),k\|\\
&\ge \|g(x)-g(a),k\|\ge \left\|g(x),k\right\|-\left\|g(a),k\right\|
\end{align*}
Consequently,
\begin{equation}\label{inf3}
\left\|g(x),k\right\|\le \left\|g(a),k\right\|+\sigma\le \left\|g(a),k\right\|+\|g(b),k\|+\sigma
\end{equation}
for all $x \in [a, b]$. Thus, the function $g$ is $(2,k)$-bounded on the interval $[a,b]$.
\end{proof}

The converse of the aforementioned theorem does not hold in all cases, as demonstrated by the following counterexample.
\begin{example}\label{ejemplo 1k}

Consider the space $\left(\mathbb{W} \subset \mathbb{C}^2 \times \mathbb{C}, \|\cdot, \cdot\|\right)$, where the generalized two-norm is specified in Example \ref{generaltwo-norma} as 
$$
\|z, k\| := \sqrt{|z_1|^2 + |z_2|^2} |k|.
$$
Let $k=-\frac{1}{2}i\in \mathbb{C}$ be. Let us see that $g\colon [\sqrt{3},2]\to\mathbb{W}^{-\frac{1}{2}i}$ defined by
$$
g(x)=\left\{
\begin{matrix}
(2i,-i),\ \text{if}\ x\ \text{is rational},\ x\in[\sqrt{3},2], \\
 \\ \ \ (0,0),\ \text{if}\ x\ \text{is irrational},\ x\in[\sqrt{3},2].
 \end{matrix}\right.
 $$

Let us verify that $g$ is $(2,k)$-bounded, but not of bounded generalized 2-variation.

Indeed, if $x$ is rational,
$$
\|g(x),-\frac{1}{2}i\|=\|(2i,-i),-\frac{1}{2}i\|=\sqrt{|2i|^{2}+|-i|^{2}} |-\frac{1}{2}i|=\frac{\sqrt{5}}{2}.
$$
If $x$ is irrational,
$$
\|g(x),-\frac{1}{2}i\|=\|(0,0),-\frac{1}{2}i\|=\sqrt{|0|^{2}+|0|^{2}} |-\frac{1}{2}i|=0.
$$
Hence, 
$$
\|g(x),-\frac{1}{2}i\|=\left\{\begin{matrix}
\frac{\sqrt{5}}{2},\ \text{if}\ x\ \text{is rational},\ x \in[\sqrt{3},2], \\
 \\ \ \ 0 ,\ \text{if}\ x\ \text{is irrational},\ x\in[\sqrt{3},2].
\end{matrix}\right.
$$ 
Therefore, $\|g(x),-\frac{1}{2}i\|\leq \frac{\sqrt{5}}{2}$ for all $x \in [\sqrt{3},2].$ Thus, $g$ is $(2,k)$-bounded in $[\sqrt{3},2]$.

Next we will show an application $g$ described above does not satisfy the condition of bounded variability in a generalized two-normed space.

We will construct a partition of the interval $[\sqrt{3}, 2]$ as follows:

We take as first element the irrational $\sqrt{3}$, second element a rational number and so on we will alternate between rationals and irrationals until we get to the rational 2.

Then,
\begin{align*}
V_{\sqrt{3}}^{2}\left(g,\mathbb{W}^{-\frac{1}{2}i}\right)&=\sup\Bigg\{\sum_{j=1}^{n}\| g(t_j)-g(x_{j-1}), k\|:P\in\mathcal{P}[a,b]\Bigg\}\\
&\ge \sum_{j=1}^{n}\left\|g(x_j)-g(x_{j-1}),-i\right\|\\
&=\|g(x_1)-g(x_0),-i\|+\cdots+\|g(x_n)-g(x_{n-1}),-i\|\\
&=\|(2i,-i),-i\|+ \cdots+\|(2i,-i),-i\| \\
&=\frac{\sqrt{5}}{2} + \cdots + \frac{\sqrt{5}}{2}= n\frac{\sqrt{5}}{2}.
\end{align*}

In the partition of $[\sqrt{3},2]$ constructed above it is evident that the set $\left\{ V_a^b[g,\mathbb{W}^{k};P]\colon P\in\mathcal{P}([a,b])\right\}$ is not upper bounded, which implies that the application $g$ does not possess bounded generalized 2-variation.

\end{example}

In the following section, we show that the set of functions under the definition given in \ref{vaacotada} turns out to be a vector space.

\subsection{New applications of bounded generalized 2-variation}

\begin{theorem}\label{sum prod}
For any pair of applications on a generalized two-normed space and a scalar, it is satisfied that both the scalar multiple and the sum are also functions of bounded generalized 2-variation.
\end{theorem}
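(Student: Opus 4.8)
The plan is to reduce the statement entirely to the already-established Theorem \ref{ass}, which supplies exactly the two quantitative estimates we need, so that the theorem becomes a short corollary rather than a fresh argument. First I would fix the data and unwind the definitions: let $g, h \colon [a,b] \to \mathbb{W}^k$ be functions of bounded generalized 2-variation and let $\alpha \in \mathbb{C}$. By Definition \ref{vaacotada} this means precisely that $V_a^b(g,\mathbb{W}^k) < \infty$ and $V_a^b(h,\mathbb{W}^k) < \infty$. Before forming the 2-variations of $\alpha g$ and $g+h$, I would first record that these are genuine maps into $\mathbb{W}^k$: since $\mathbb{W}^k$ is a vector subspace of $A$, it is closed under scalar multiplication and addition, so $(\alpha g)(t) = \alpha\, g(t)$ and $(g+h)(t) = g(t)+h(t)$ lie in $\mathbb{W}^k$ for every $t \in [a,b]$, which legitimizes computing $V_a^b(\alpha g, \mathbb{W}^k)$ and $V_a^b(g+h, \mathbb{W}^k)$.

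For the scalar multiple I would invoke Theorem \ref{ass}(i), giving $V_a^b(\alpha g,\mathbb{W}^k) = |\alpha|\,V_a^b(g,\mathbb{W}^k)$; as the right-hand side is a finite number multiplied by a finite variation, it is finite, so $\alpha g \in BV([a,b],\mathbb{W}^k)$. For the sum I would invoke Theorem \ref{ass}(ii), giving $V_a^b(g+h,\mathbb{W}^k) \le V_a^b(g,\mathbb{W}^k) + V_a^b(h,\mathbb{W}^k)$, whose right-hand side is finite as a sum of two finite quantities, whence $g+h \in BV([a,b],\mathbb{W}^k)$. Combined with the trivial observation that the zero map has zero variation, these two closure properties show that $BV([a,b],\mathbb{W}^k)$ is a vector subspace of the space of all maps $[a,b]\to\mathbb{W}^k$, which is the content the theorem is really asserting.

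I do not anticipate a genuine obstacle here, since all the analytic work—the partition-wise inequalities and the supremum manipulations—has been absorbed into Theorem \ref{ass}, and what remains is bookkeeping about preservation of finiteness together with the codomain being closed under the vector operations. The only point worth stating explicitly, so the argument is not merely a restatement of the earlier estimates, is that finiteness of the supremum is what upgrades the inequalities of Theorem \ref{ass} into membership in $BV([a,b],\mathbb{W}^k)$. I would therefore keep the proof deliberately short, presenting it as a direct consequence of Theorem \ref{ass} rather than re-deriving any partition-level bounds.
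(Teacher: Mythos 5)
Your proposal is correct and follows essentially the same route as the paper: both proofs reduce the statement to Theorem \ref{ass}(i) and (ii), using finiteness of $V_a^b(g,\mathbb{W}^k)$ and $V_a^b(h,\mathbb{W}^k)$ to conclude that $V_a^b(\alpha g,\mathbb{W}^k)$ and $V_a^b(g+h,\mathbb{W}^k)$ are finite. Your added remarks (that $\mathbb{W}^k$ is closed under the vector operations, so the new maps genuinely land in $\mathbb{W}^k$) only make explicit what the paper leaves implicit.
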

\begin{proof}
If $g, h \in BV([a,b], \mathbb{W}^{k})$, then there exist $\lambda, \gamma \in \mathbb{R}^{+}$ such that
$$
V_{a}^{b}(g,\mathbb{W}^{k}) \le \lambda \quad \text{ and } \quad V_{a}^{b}(h,\mathbb{W}^{k}) \le \gamma.
$$
Then, taking $\left| \alpha \right|\lambda = M$ and $\lambda + \gamma = N$, and using Theorem \ref{ass}, we obtain
$$
V_{a}^{b}(\alpha g,\mathbb{W}^{k}) \le M \quad \text{ and } \quad V_{a}^{b}(g+h,\mathbb{W}^{k}) \le N.
$$
Thus it is evident that both scalar multiples and sums of applications of bounded generalized 2-variation preserve the property of bounded generalized 2-variation.
\end{proof}
 \begin{theorem}\label{distributive}
If $g\in BV([\alpha ,\beta ],\mathbb{W}^{k})$ and $\lambda \in [\alpha , \beta]$, then it is satisfied that $$V_{\alpha}^{\lambda}(g,\mathbb{W}^{k})+V_{\lambda}^{\beta}(g,\mathbb{W}^{k})\le V_{\alpha}^{\beta}(g,\mathbb{W}^{k}).$$ 
\end{theorem}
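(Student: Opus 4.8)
The plan is to exploit the fact that a partition of $[\alpha,\beta]$ having $\lambda$ as one of its nodes decomposes exactly into a partition of $[\alpha,\lambda]$ followed by a partition of $[\lambda,\beta]$, and that the associated 2-variation sum splits accordingly. This is the ``superadditive'' half of the standard interval-additivity of variation, and it is the side provable purely by concatenation of partitions, with no appeal to the triangle inequalities (G2)--(G3).

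First I would fix arbitrary partitions $P_1=\{t_0=\alpha,\dots,t_m=\lambda\}\in\mathcal{P}([\alpha,\lambda])$ and $P_2=\{s_0=\lambda,\dots,s_p=\beta\}\in\mathcal{P}([\lambda,\beta])$, and form their union $P:=P_1\cup P_2\in\mathcal{P}([\alpha,\beta])$, which simply concatenates the node sets at the common endpoint $\lambda$. Since the 2-variation sum is obtained by summing the terms $\|g(x_i)-g(x_{i-1}),k\|$ over consecutive nodes, the sum over $P$ is literally the sum over $P_1$ plus the sum over $P_2$; that is,
\[
V_\alpha^\lambda[g,\mathbb{W}^k;P_1]+V_\lambda^\beta[g,\mathbb{W}^k;P_2]=V_\alpha^\beta[g,\mathbb{W}^k;P]\le V_\alpha^\beta(g,\mathbb{W}^k),
\]
where the last inequality holds because $V_\alpha^\beta(g,\mathbb{W}^k)$ is the supremum of the variation sums over all partitions of $[\alpha,\beta]$.

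Next I would pass to the suprema. As $g\in BV([\alpha,\beta],\mathbb{W}^k)$, the right-hand side above is a finite real number, so the displayed inequality shows that both $\{V_\alpha^\lambda[g,\mathbb{W}^k;P_1]\}$ and $\{V_\lambda^\beta[g,\mathbb{W}^k;P_2]\}$ are bounded above, whence $V_\alpha^\lambda(g,\mathbb{W}^k)$ and $V_\lambda^\beta(g,\mathbb{W}^k)$ are finite. Fixing $P_2$ and taking the supremum over all $P_1\in\mathcal{P}([\alpha,\lambda])$ yields $V_\alpha^\lambda(g,\mathbb{W}^k)+V_\lambda^\beta[g,\mathbb{W}^k;P_2]\le V_\alpha^\beta(g,\mathbb{W}^k)$; then taking the supremum over all $P_2\in\mathcal{P}([\lambda,\beta])$ gives the desired inequality.

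The main obstacle --- really the only nontrivial point --- is the justification of this two-stage supremum manipulation, namely that $\sup_{P_1}\sup_{P_2}\big(V_\alpha^\lambda[g,\mathbb{W}^k;P_1]+V_\lambda^\beta[g,\mathbb{W}^k;P_2]\big)$ equals $V_\alpha^\lambda(g,\mathbb{W}^k)+V_\lambda^\beta(g,\mathbb{W}^k)$. This rests on the elementary identity $\sup(S+T)=\sup S+\sup T$ for nonempty sets $S,T\subset\mathbb{R}$ that are bounded above, and the requisite boundedness was secured in the previous step. I would also record the degenerate cases $\lambda=\alpha$ and $\lambda=\beta$, in which one subinterval collapses and its 2-variation is $0$, so the stated inequality holds trivially.
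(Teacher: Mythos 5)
Your proof is correct and follows essentially the same route as the paper: both arguments concatenate a partition of $[\alpha,\lambda]$ with a partition of $[\lambda,\beta]$ at the common node $\lambda$, observe that the 2-variation sums add, and bound the combined sum by $V_\alpha^\beta(g,\mathbb{W}^k)$. The only difference is bookkeeping in the passage to suprema --- you use a two-stage supremum (justified via $\sup(S+T)=\sup S+\sup T$), while the paper picks $\epsilon/2$-near-optimal partitions of each subinterval; these are interchangeable standard devices.
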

\begin{proof}
It is straightforward to verify that $V_{\alpha}^{\lambda}(g, \mathbb{W}^{k})$ and $V_{\lambda}^{\beta}(g, \mathbb{W}^{k})$ are both finite.

Let $\epsilon > 0$ be given.\\ We take partitions $P_{[\alpha,\lambda]} = \{t_0, t_1, \cdots, t_j\} \in \mathcal{P}([\alpha, \lambda])$ and $P_{[\lambda,\beta]} = \{t_j, t_{j+1}, \cdots, t_n\} \in \mathcal{P}([\lambda, \beta])$ such that $t_{n} = \lambda$,
\[
V_{\alpha}^{\lambda }(g, \mathbb{W}^{k}) - \frac{1}{2}\epsilon \leq \sum_{i=1}^j \|g(t_i) - g(t_{i-1}), k\|
\]
and
\[
V_{\lambda}^{\beta}(g, \mathbb{W}^{k}) - \frac{1}{2}\epsilon \leq \sum_{p=j+1}^n \|g(t_p) - g(t_{p-1}), k\|.
\]
Then,
\[
V_{\alpha}^{\lambda }(g, \mathbb{W}^{k}) + V_{\lambda}^{\beta}(g, \mathbb{W}^{k}) - \epsilon
\leq \sum_{i=1}^n \|g(t_p) - g(t_{p-1}), k\|
\leq V_{\alpha}^{\beta }(g, \mathbb{W}^{k}).
\]
Thus, it is proven.
\end{proof}

Now, we guarantee the existence of a generalized symmetric two-norm for any function space with bounded generalized 2-variation.
\begin{theorem}\label{two-norma}
Given a generalized two-normed space $(\mathbb{W} \subset A\times B, \|\cdot,\cdot\|)$. The function $$\|\cdot,\cdot\|_{2G}\colon BV ([a, b],\mathbb{W}^{k})\times BV ([a, b],\mathbb{W}^{k}) \longrightarrow [0,\infty )$$ defined by
$$
\|f,h\|_{2G}:=V_{a}^{b}(f,\mathbb{W}^{k})  \left\|h(a),k\right\|+V_{a}^{b}(h,\mathbb{W}^{k}) \left\|f(a),k\right\|
$$
is a generalized symmetric two-norm on $BV ([a, b], \mathbb{W}^{k})$.
\begin{proof}
(S1)
\begin{align*}
\|f,h\|_{2G}&=V_{a}^{b}(f,\mathbb{W}^{k})  \left\|h(a),k\right\|+V_{a}^{b}(h,\mathbb{W}^{k}) \left\|f(a),k\right\| \\
&=V_{a}^{b}(h,\mathbb{W}^{k}) \left\|f(a),k\right\|+V_{a}^{b}(f,\mathbb{W}^{k})  \left\|h(a),k\right\|=\|h,f\|_{2G}
\end{align*}

(S2)
\begin{align*}
\|\alpha f,h\|_{2G}&=V_{a}^{b}(\alpha f,\mathbb{W}^{k})  \left\|h(a),k\right\|+V_{a}^{b}(h,\mathbb{W}^{k}) \left\|\alpha f(a),k\right\| \\
&=\left| \alpha\right|V_{a}^{b}( f,\mathbb{W}^{k})  \left\|h(a),k\right\|+\left| \alpha\right|V_{a}^{b}(h,\mathbb{W}^{k}) \left\|f(a),k\right\|  =\left|\alpha\right| \|f,h\|_{2G}
\end{align*}

(S3)
\begin{align*}
\|f,h\|_{2G}+\|g,h\|_{2G}&=V_{a}^{b}(f,\mathbb{W}^{k})  \left\|h(a),k\right\|+V_{a}^{b}(h,\mathbb{W}^{k}) \left\|f(a),k\right\|\\
&\ \ \ + V_{a}^{b}(g,\mathbb{W}^{k})  \left\|h(a),k\right\|+V_{a}^{b}(h,\mathbb{W}^{k}) \left\|g(a),k\right\| \\
&= \left( V_{a}^{b}(f,\mathbb{W}^{k}) + V_{a}^{b}(g,\mathbb{W}^{k})\right)\left\|h(a),k\right\|\\
&\ \ \ +V_{a}^{b}(h,\mathbb{W}^{k}) \left( \left\|f(a),k\right\|+\left\|g(a),k\right\|\right)\\
&\ge  V_{a}^{b}(f+g,\mathbb{W}^{k})\left\|h(a),k\right\| +V_{a}^{b}(h,\mathbb{W}^{k}) \left\|f(a)+g(a),k\right\|\\
&=\|f+g,h\|_{2G}.
\end{align*}
The above allows us to state that every space of functions of bounded variation is in fact a generalized symmetric two-normed space.
\end{proof}
\end{theorem}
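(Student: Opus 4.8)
The plan is to verify directly the three axioms (S1)--(S3) that define a generalized symmetric two-norm, after first confirming that $\|\cdot,\cdot\|_{2G}$ is well defined and that its underlying domain qualifies. Since here the domain is the full product $BV([a,b],\mathbb{W}^k)\times BV([a,b],\mathbb{W}^k)$, the required property $\mathrm{E}=\mathrm{E}^{-1}$ holds automatically, and Theorem \ref{sum prod} guarantees that $BV([a,b],\mathbb{W}^k)$ is a vector space (closed under sums and scalar multiples), so the expressions appearing in (S2) and (S3) are meaningful. For well-definedness I would observe that $f,h\in BV([a,b],\mathbb{W}^k)$ forces $V_a^b(f,\mathbb{W}^k)$ and $V_a^b(h,\mathbb{W}^k)$ to be finite, while the two-norm values $\|h(a),k\|$ and $\|f(a),k\|$ are non-negative reals; hence $\|f,h\|_{2G}$ indeed lands in $[0,\infty)$.

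Axioms (S1) and (S2) are then routine. For (S1), interchanging $f$ and $h$ in the defining formula merely swaps the two summands, so commutativity of addition in $\mathbb{R}$ gives $\|f,h\|_{2G}=\|h,f\|_{2G}$. For (S2), I would expand $\|\alpha f,h\|_{2G}$ and handle its two terms separately: on the first I apply Theorem \ref{ass}$(i)$ to get $V_a^b(\alpha f,\mathbb{W}^k)=|\alpha|\,V_a^b(f,\mathbb{W}^k)$, and on the second I use axiom (G1) of the generalized two-norm to write $\|\alpha f(a),k\|=|\alpha|\,\|f(a),k\|$; factoring $|\alpha|$ from both summands yields $\|\alpha f,h\|_{2G}=|\alpha|\,\|f,h\|_{2G}$.

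The substantive step is the triangle inequality (S3). I would begin from $\|f,h\|_{2G}+\|g,h\|_{2G}$ and regroup its four terms so that the coefficient of $\|h(a),k\|$ is $V_a^b(f,\mathbb{W}^k)+V_a^b(g,\mathbb{W}^k)$ and the factor multiplying $V_a^b(h,\mathbb{W}^k)$ is $\|f(a),k\|+\|g(a),k\|$. The heart of the argument is to apply two independent estimates simultaneously: subadditivity of the variation, $V_a^b(f+g,\mathbb{W}^k)\le V_a^b(f,\mathbb{W}^k)+V_a^b(g,\mathbb{W}^k)$ from Theorem \ref{ass}$(ii)$, together with the triangle inequality (G3) of the underlying generalized two-norm, $\|f(a)+g(a),k\|\le\|f(a),k\|+\|g(a),k\|$. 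Substituting both lower bounds produces $\|f,h\|_{2G}+\|g,h\|_{2G}\ge\|f+g,h\|_{2G}$, which is precisely (S3). I expect no genuine obstacle here; the only care needed is bookkeeping, ensuring that one is verifying the symmetric axioms (S1)--(S3) rather than the asymmetric (G1)--(G3), and that both inequalities are applied so as to point in the same direction without inadvertently reversing one.
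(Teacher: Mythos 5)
Your proposal is correct and follows essentially the same route as the paper's proof: direct verification of (S1) by symmetry of the formula, (S2) via Theorem \ref{ass}$(i)$ together with homogeneity of the underlying two-norm, and (S3) by regrouping and applying Theorem \ref{ass}$(ii)$ together with the triangle inequality (G3). Your additional remarks on well-definedness and on $BV([a,b],\mathbb{W}^k)$ being a vector space (via Theorem \ref{sum prod}) are sound bookkeeping that the paper leaves implicit.
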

Note that in Example \ref{ejemplo2ka},
\begin{align*}
\| f,f\|_{2G}&=2 V_{a}^{b}(f,\mathbb{W}^{\sqrt{2}})\|f(a),\sqrt{2}\|=2(2(b-a))\|f(a),\sqrt{2}\|\\
&=4(b-a)\sqrt{|ai|^{2}+|ai|^{2}}|\sqrt{2}|=8|a|(b-a).
\end{align*}

\section{Conclusions}

In semi-normed spaces, the concept of a function with bounded variation naturally extends to a function of bounded generalized variation (Proposition \ref{varseminorm}). Furthermore, the notion of bounded variation is well-defined in generalized two-normed spaces (Definition \ref{vaacotada}).  
Thus, the notion presented in this research generalizes the one introduced by Chistyakov \cite{Chistyakov Metric}. Moreover, Remark \ref{2kvar2var} shows that the results given by Cure, Ferrer S., and Ferrer V. \cite{CFF} are a consequence of the findings of this work. In generalized two-normed spaces, the variation of a function determines an upper bound on the distance between the images of functions with bounded variation under the generalized two-norm (Proposition \ref{st}). Additionally, the space of functions exhibiting generalized bounded variation in such spaces can be regarded as a generalized symmetric two-normed space (Theorem \ref{two-norma}).

\section*{Author contributions}
Ferrer Kandy, Cure Jaffeth and Ferrer Osmin: Conceptualization, Methodology, Validation, Software,
Writing-original draft, Writing-review \& editing. All authors contributed equally to the manuscript.
All authors have read and approved the final version of the manuscript for publication.
\subsection*{Acknowledgements}
This research was supported by the University of Sucre, Sincelejo-Colombia, the Pontifical Bolivarian University, Monter\'ia-Colombia and the Catholic University of the Most Holy Conception, Concepci\'on-Chile.
\section*{Conflict of interest}
The authors declare that they have no conflict of interest in this work.

\end{document}